\newtheorem{theorem}{Theorem}
\newtheorem{corollary}[theorem]{Corollary}
\newtheorem{proposition}[theorem]{Proposition}
\newtheorem{remark}[theorem]{Remark}
\newenvironment{proof}[1][Proof]{\noindent\textbf{#1.} }{\ \rule{0.5em}{0.5em}}
\begin{document}

\title{On the symmetry group of the $n$-dimensional Berwald-Mo\'{o}r metric}
\author{Hengameh Raeisi-Dehkordi\thanks{{\small Department of Mathematics
and Computer Sciences, Amirkabir University of Technology (Tehran
Polytechnic), 424 Hafez Ave., 15875-4413 Tehran, Iran, E-mail:
hengameh\_62@aut.ac.ir}}\ \ and Mircea Neagu\thanks{{\small Department of
Mathematics and Informatics, University Transilvania of Bra\c{s}ov, 50 Iuliu
Maniu Blvd., 500091 Bra\c{s}ov, Romania, E-mail: mircea.neagu@unitbv.ro%
\newline
}}}
\date{}
\maketitle

\begin{abstract}
In this paper we parametrize the symmetry group of the $n$-dimensional
Berwald-Mo\'{o}r metric. Some properties of this Lie group are studied, and
its corresponding Lie algebra is computed.
\end{abstract}

\textbf{Mathematics Subject Classification (2010):} 53C60, 54H15, 17B45.

\textbf{Key words and phrases:} tangent bundle, $n$-dimensional Berwald-Mo%
\'{o}r metric, symmetry group, Lie group, Lie algebra.

\section{Introduction}

The geometrical Berwald-Mo\'{o}r structure (\cite{Berwald}, \cite{Moor-1})
was intensively investigated by P.K. Rashevski (\cite{Rashevski}) and
further physically fundamented and developed by G.S. Asanov (\cite{Asanov[1]}%
), D.G. Pavlov and G.I. Garas'ko (\cite{Pavlov(2)[10]}, \cite{Garasko-2}).
These physical studies emphasize the importance of the Finsler geometry
characterized by the total equality in rights of all non-isotropic
directions in the theory of space-time structure, gravitation and
electromagnetism. In such a context, one underlines the important role
played by the Berwald-Mo\'{o}r metric%
\begin{equation*}
F_{n}:TM^{n}\rightarrow \mathbb{R},\mathbb{\qquad }F_{n}(y)=\sqrt[n]{%
y^{1}y^{2}...y^{n}},\qquad n\geq 2,
\end{equation*}%
whose Finslerian geometry is studied on tangent bundles by M. Matsumoto and
H. Shimada (\cite{Mats-Shimada}), and, in a jet geometrical approach, by V.
Balan and M. Neagu (\cite{Balan-Neagu}). It is a well-known fact that, from
a physical point of view, an Einstein relativistic law says that the form of
all physical laws must be the same in any inertial reference frame (local
chart of coordinates). For such a reason, we study in this paper the
geometrical coordinate transformations which keep unchanged the Berwald-Mo%
\'{o}r metric of order $n\geq 3.$ The particular two and three dimensional
cases are deeply studied in \cite{Neagu-Raeisi}. Notice that the geometrical
translation of the previous Einstein's physical law is that any geometrical
object used in that physical theory must have the same local form in any
local chart of coordinates.

\section{The symmetry group of the Berwald-Mo\'{o}r metric of order $n$}

We remind that $(x,y)=(x^{i},y^{i})$ are the coordinates of the tangent
bundle $TM^{n}$ (associated to an $n$-dimensional real manifold $M^{n}$),
which transform by the rules (the Einstein convention of summation is
assumed everywhere):%
\begin{equation}
\widetilde{x}^{i}=\widetilde{x}^{i}(x^{j}),\qquad \widetilde{y}^{i}=\dfrac{%
\partial {\widetilde{x}^{i}}}{\partial {x^{j}}}y^{j},  \label{tr-rules}
\end{equation}%
where $i,j=\overline{1,n}$ and rank $(\partial \widetilde{x}^{i}/\partial
x^{j})=n$. The transformation rules (\ref{tr-rules}) rewrite explicitly as

\begin{equation}
\begin{array}{c}
\widetilde{{x}}^{1}=\widetilde{{x}}^{1}(x^{1},...,x^{n}),\hspace{1mm}%
\widetilde{{x}}^{2}=\widetilde{{x}}^{2}(x^{1},...,x^{n}),...,\hspace{1mm}%
\widetilde{{x}}^{n}=\widetilde{{x}}^{n}(x^{1},...,x^{n}),\medskip  \\ 
\begin{array}{l}
\widetilde{y}^{1}=\partial _{1}^{1}y^{1}+\partial _{2}^{1}y^{2}+\partial
_{3}^{1}y^{3}+....+\partial _{n}^{1}y^{n}\medskip  \\ 
\widetilde{y}^{2}=\partial _{1}^{2}y^{1}+\partial _{2}^{2}y^{2}+\partial
_{3}^{2}y^{3}+....+\partial _{n}^{2}y^{n} \\ 
\vdots  \\ 
\widetilde{y}^{n}=\partial _{1}^{n}y^{1}+\partial _{2}^{n}y^{2}+\partial
_{3}^{n}y^{3}+....+\partial _{n}^{n}y^{n},%
\end{array}%
\end{array}
\label{tr-coordinates-order n}
\end{equation}%
\newline
where we used the notations $\partial _{j}^{i}:=\partial \widetilde{x}%
^{i}/\partial x^{j},$ and we have $\det \left( \partial _{j}^{i}\right) \neq
0$.

Let us consider now the Berwald-Mo\'{o}r metric of order $n$ on the tangent
bundle $TM^{n}$, which is expressed by 
\begin{equation}
F_{n}(y)=\sqrt[n]{y^{1}y^{2}y^{3}...y^{n}}.  \label{n-Berwal-Moor}
\end{equation}

To have a global geometrical character of the Berwald-Mo\'{o}r metric (\ref%
{n-Berwal-Moor}), we must have $F_{n}(\widetilde{y})=F_{n}(y).$ It means
that $\widetilde{y}^{1}\widetilde{y}^{2}...\widetilde{y}%
^{n}=y^{1}y^{2}...y^{n}$.

\begin{theorem}
\label{n dimension}For $n\geq 3$, a transformation of coordinates (\ref%
{tr-coordinates-order n}) invariates the Berwald-Mo\'{o}r metric (\ref%
{n-Berwal-Moor}) if and only if there exist some arbitrary real numbers $%
a_{1},$ $a_{2},...,$ $a_{n}$ verifying the equality $\Pi _{i=1}^{n}a_{i}=1$,
together with a permutation $\sigma \in S_{n}$ of the set $\{1,2,...,n\}$,
such that 
\begin{equation}
\widetilde{\mathcal{X}}=\mathcal{P}_{\sigma }\left(
a_{1},a_{2},...,a_{n}\right) \cdot \mathcal{X}+\mathcal{X}_{0},  \label{16}
\end{equation}%
where 
\begin{equation*}
\mathcal{X}_{0}=\left( 
\begin{array}{c}
x_{0}^{1} \\ 
x_{0}^{2} \\ 
\vdots  \\ 
x_{0}^{n}%
\end{array}%
\right) \in M_{n,1}(\mathbb{R}),\qquad \mathcal{X}=\left( 
\begin{array}{c}
x^{1} \\ 
x^{2} \\ 
\vdots  \\ 
x^{n}%
\end{array}%
\right) ,\qquad \widetilde{\mathcal{X}}=\left( 
\begin{array}{c}
\widetilde{{x}}^{1} \\ 
\widetilde{{x}}^{2} \\ 
\vdots  \\ 
\widetilde{{x}}^{n}%
\end{array}%
\right) ,
\end{equation*}%
and the matrix $\mathcal{P}_{\sigma }\left( a_{1},a_{2},...,a_{n}\right) $
has all entries equal to zero except the entries%
\begin{equation*}
p_{1\sigma (1)}=a_{1},\text{ }p_{2\sigma (2)}=a_{2},\text{ }...\text{ },%
\text{ }p_{n\sigma (n)}=a_{n}.
\end{equation*}
\end{theorem}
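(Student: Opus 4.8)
The plan is to convert the invariance requirement $F_{n}(\widetilde y)=F_{n}(y)$, i.e.\ $\widetilde y^{1}\widetilde y^{2}\cdots\widetilde y^{n}=y^{1}y^{2}\cdots y^{n}$, into a statement about factorizations of the monomial $y^{1}y^{2}\cdots y^{n}$ in the polynomial ring $\mathbb{R}[y^{1},\dots,y^{n}]$, and then to promote the pointwise description so obtained to a global one by a continuity argument. The direction that is almost immediate is sufficiency: if the change of coordinates is of the form (\ref{16}), then $\partial^{i}_{j}=p_{ij}$, so $\widetilde y^{i}=a_{i}\,y^{\sigma(i)}$, whence $\prod_{i=1}^{n}\widetilde y^{i}=\bigl(\prod_{i=1}^{n}a_{i}\bigr)\prod_{i=1}^{n}y^{\sigma(i)}=\prod_{j=1}^{n}y^{j}$, because $\sigma$ is a bijection of $\{1,\dots,n\}$ and $\prod_{i}a_{i}=1$; one also notes $\det(\partial^{i}_{j})=\det\mathcal{P}_{\sigma}=\operatorname{sgn}(\sigma)\prod_{i}a_{i}=\pm1\neq0$, so (\ref{16}) is indeed an admissible transformation of the type (\ref{tr-coordinates-order n}).

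For necessity I would fix a base point $x$ and view $\widetilde y^{i}=\sum_{j}\partial^{i}_{j}(x)\,y^{j}$ as a homogeneous linear form $L_{i}^{(x)}$ in the indeterminates $y^{1},\dots,y^{n}$; the invariance condition becomes the polynomial identity $\prod_{i=1}^{n}L_{i}^{(x)}(y)=y^{1}y^{2}\cdots y^{n}$. None of the $L_{i}^{(x)}$ can vanish identically, since otherwise the left-hand side would be $0$ (equivalently $\det(\partial^{i}_{j})=0$, which is excluded). Now $\mathbb{R}[y^{1},\dots,y^{n}]$ is a unique factorization domain in which $y^{1},\dots,y^{n}$ are pairwise non-associate prime elements, and every nonzero linear form is irreducible; hence each factor $L_{i}^{(x)}$ of $y^{1}\cdots y^{n}$ must be an associate of some $y^{j}$, i.e.\ a nonzero scalar multiple of it, and counting the $n$ irreducible factors on each side shows that the map sending $i$ to that index $j$ is a permutation $\sigma_{x}\in S_{n}$. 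Therefore $\partial^{i}_{j}(x)=a_{i}(x)\,\delta_{j,\sigma_{x}(i)}$ for suitable reals $a_{i}(x)\neq0$, and comparing the coefficients of the monomial $y^{1}\cdots y^{n}$ on the two sides forces $\prod_{i=1}^{n}a_{i}(x)=1$.

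The step I expect to be the genuine obstacle is passing from this pointwise information to a uniform description. For each fixed $i$, exactly one of the (continuous) entries $\partial^{i}_{1}(x),\dots,\partial^{i}_{n}(x)$ is nonzero at every $x$, so the $\{1,\dots,n\}$-valued function $x\mapsto\sigma_{x}(i)$ is continuous, hence locally constant; restricting to a connected chart domain, it is a single permutation $\sigma$. Consequently $\partial\widetilde x^{i}/\partial x^{j}\equiv0$ for all $j\neq\sigma(i)$, so $\widetilde x^{i}$ depends on the single variable $x^{\sigma(i)}$, and $a_{i}=d\widetilde x^{i}/dx^{\sigma(i)}$ is a function of $x^{\sigma(i)}$ alone. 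Putting $b_{k}:=a_{\sigma^{-1}(k)}$, the constraint reads $\prod_{k=1}^{n}b_{k}(x^{k})\equiv1$ with the $k$-th factor depending only on $x^{k}$; freezing the variables $x^{m}$ with $m\neq k$ and letting $x^{k}$ vary shows that $b_{k}$, and hence each $a_{i}$, is constant. Integrating $d\widetilde x^{i}/dx^{\sigma(i)}=a_{i}$ then gives $\widetilde x^{i}=a_{i}x^{\sigma(i)}+x_{0}^{i}$ for constants $x_{0}^{i}$, which is exactly the matrix identity (\ref{16}) subject to $\prod_{i}a_{i}=1$. Apart from this pointwise-to-global passage and the appeal to unique factorization, everything reduces to a routine verification.
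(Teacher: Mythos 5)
Your proof is correct, but the heart of it runs along a genuinely different line from the paper's. For the pointwise classification of the Jacobian, the paper expands $\widetilde{y}^{1}\cdots\widetilde{y}^{n}=y^{1}\cdots y^{n}$ into the coefficient system (\ref{system-n}) and then argues combinatorially: it extracts a permutation $\sigma$ with $\partial^{i}_{\sigma(i)}\neq 0$ from the first equation, and kills any extra nonzero entry $\partial^{i}_{\sigma(j)}$ ($j\neq i$) by invoking a \emph{third} index $q$ (this is precisely where $n\geq 3$ enters) and the vanishing products, forcing a whole row of the Jacobian to vanish and contradicting $\det(\partial^{i}_{j})\neq 0$. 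You instead read the same identity as a factorization $\prod_{i}L_{i}^{(x)}=y^{1}\cdots y^{n}$ in the UFD $\mathbb{R}[y^{1},\dots,y^{n}]$, so each linear form is a nonzero scalar multiple of some $y^{j}$ and the assignment $i\mapsto j$ is a permutation; this is cleaner, and notably it does not use $n\geq 3$ at all, so your argument covers $n=2$ as well (the case the paper delegates to \cite{Neagu-Raeisi}). You are also more scrupulous than the paper on two points it passes over in silence: that the permutation $\sigma_{x}$ could a priori vary with the base point (your local-constancy/connectedness argument), and that the nonzero entries $a_{i}$, a priori functions of $x^{\sigma(i)}$, are forced to be constant by the relation $\prod_{k}b_{k}(x^{k})\equiv 1$ with separated variables; the paper simply asserts $\partial^{i}_{\sigma(i)}=a_{i}\in\mathbb{R}\setminus\{0\}$ and writes down the affine form. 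What the paper's route buys is elementarity — it stays entirely within explicit coefficient equations and makes visible exactly where the hypothesis $n\geq 3$ is used — while yours buys generality and a shorter, structurally transparent argument at the cost of appealing to unique factorization.
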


\begin{proof}
The transformation of coordinates (\ref{tr-coordinates-order n}) invariates
the Berwald-Mo\'{o}r metric (\ref{n-Berwal-Moor}) if and only if 
\begin{equation}
\left\{ 
\begin{array}{l}
\sum_{\tau \in S_{n}}\partial _{\tau (1)}^{1}\partial _{\tau
(2)}^{2}...\partial _{\tau (n)}^{n}=1\medskip \\ 
\partial _{k_{1}}^{1}\partial _{k_{2}}^{2}...\partial _{k_{n}}^{n}=0,%
\end{array}%
\right.  \label{system-n}
\end{equation}%
for any $k_{1},k_{2},...,k_{n}\in \{1,2,...,n\}$ such that $%
\{k_{1},k_{2},...,k_{n}\}\neq \{1,2,...,n\}$ (this means that at least two
indices $k_{r}$ and $k_{s}$ are equal). Then, there exists a permutation $%
\sigma $ of the set $\{1,2,...,n\}$ such that%
\begin{equation*}
\partial _{\sigma (i)}^{i}\neq 0,\quad \forall \text{ }i=\overline{1,n}.
\end{equation*}%
This is because if we suppose that there exists an index $i_{0}\in
\{1,2,...,n\}$ such that $\partial _{\sigma (i_{0})}^{i_{0}}=0$ for any
permutation $\sigma \in S_{n}$, then (using the first equation of the system
(\ref{system-n})) we deduce that we have $0=1.$ Contradiction!

Let us prove now that for any $i\in \{1,2,...,n\}$ we have%
\begin{equation*}
\partial _{k}^{i}=0,\quad \forall \text{ }k\in \{1,2,...,n\}\backslash
\{\sigma (i)\}.
\end{equation*}%
On the contrary, suppose there exist $i,k\in \{1,2,...,n\}$, with $k\neq {%
\sigma (i)}$, such that $\partial _{k}^{i}\neq 0$. Because we obviously have 
$k=\sigma (j),$ where $j\neq i$, it follows that there exist two different
indices $i,j\in \{1,2,...,n\}$ such that $\partial _{\sigma (j)}^{i}\neq 0.$
Because we have the inequality $n\geq 3$, it follows that there exists an
arbitrary third index $q\in \{1,2,...,n\}$ which is different by $i$ and $j$%
. Consequently, applying the second equation from the system (\ref{system-n}%
) for an arbitrary index $k_{q}\in \{1,2,...,n\}$ and for%
\begin{equation*}
k_{i}=k_{j}=\sigma (j),\quad k_{p}=\sigma (p),\text{ }\forall \text{ }p\in
\{1,2,...,n\}\backslash \{i,j,q\},
\end{equation*}%
we find%
\begin{equation*}
\underset{\overset{\nparallel }{0}}{\underbrace{\partial _{\sigma
(1)}^{1}...\partial _{\sigma (i-1)}^{i-1}\partial _{\sigma (j)}^{i}\partial
_{\sigma (i+1)}^{i+1}...\partial _{\sigma (j-1)}^{j-1}\partial _{\sigma
(j)}^{j}\partial _{\sigma (j+1)}^{j+1}...\partial _{\sigma (q-1)}^{q-1}}}%
\partial _{k_{q}}^{q}\underset{\overset{\nparallel }{0}}{\underbrace{%
\partial _{\sigma (q+1)}^{q+1}...\partial _{\sigma (n)}^{n}}}=0.
\end{equation*}%
It follows that we have%
\begin{equation*}
\partial _{k_{q}}^{q}=0,\quad\forall \text{ }k_{q}=\overline{1,n}.
\end{equation*}%
This implies that $\det \left( \partial _{j}^{i}\right) =0$. Contradiction!

In conclusion, we deduce that a transformation of coordinates (\ref%
{tr-coordinates-order n}) invariates the Berwald-Mo\'{o}r metric (\ref%
{n-Berwal-Moor}) if and only if it satisfies the following conditions:%
\begin{equation}
\partial _{\sigma (1)}^{1}\partial _{\sigma (2)}^{2}...\partial _{\sigma
(n)}^{n}=1,  \label{eq-1}
\end{equation}%
\begin{equation}
\partial _{1}^{i}=\partial _{2}^{i}=...=\partial _{\sigma
(i)-1}^{i}=\partial _{\sigma (i)+1}^{i}=...=\partial _{n-1}^{i}=\partial
_{n}^{i}=0,\quad \forall \text{ }i=\overline{1,n}.  \label{eq-2}
\end{equation}%
The equations (\ref{eq-1}) and (\ref{eq-2}) imply that $\tilde{x}^{i}=\tilde{%
x}^{i}(x^{\sigma (i)})$ and%
\begin{equation*}
\partial _{\sigma (i)}^{i}=p_{i\sigma (i)}:=a_{i}\in \mathbb{R}\backslash
\{0\}.
\end{equation*}%
In other words, we get the affine transformations (no sum by $i$):%
\begin{equation*}
\tilde{x}^{i}=p_{i\sigma (i)}x^{\sigma (i)}+x_{0}^{i},\qquad \forall \text{ }%
i=\overline{1,n},
\end{equation*}%
where $x_{0}^{i}\in \mathbb{R},$ $\forall $ $i=\overline{1,n}$.
\end{proof}

\begin{corollary}
The set of the local transformations of coordinates that invariates the
Berwald-Mo\'{o}r metric (\ref{n-Berwal-Moor}) has an algebraic structure of
a non-abelian group with respect to the operation of composition of
functions.
\end{corollary}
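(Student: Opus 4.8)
The plan is to read everything off the explicit description furnished by Theorem \ref{n dimension}. Write $\mathcal{G}$ for the set of coordinate transformations of the form (\ref{16}), that is, $\widetilde{\mathcal{X}}=\mathcal{P}_{\sigma}(a_{1},\dots,a_{n})\cdot\mathcal{X}+\mathcal{X}_{0}$ with $\sigma\in S_{n}$, $a_{i}\in\mathbb{R}$, $\prod_{i=1}^{n}a_{i}=1$, and $\mathcal{X}_{0}\in M_{n,1}(\mathbb{R})$. Since every element of $\mathcal{G}$ is an invertible affine map of $\mathbb{R}^{n}$, associativity of composition is automatic, and it suffices to verify that $\mathcal{G}$ contains the identity and is closed under composition and under taking inverses (equivalently, that $\mathcal{G}$ is a subgroup of the affine group $\mathrm{Aff}(\mathbb{R}^{n})$). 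The identity transformation is obtained for $\sigma=\mathrm{id}$, $a_{1}=\dots=a_{n}=1$, $\mathcal{X}_{0}=0$, so $\mathcal{G}\neq\emptyset$.

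For closure under composition I would compose $\widetilde{\mathcal{X}}=\mathcal{P}_{\sigma}(a)\mathcal{X}+\mathcal{X}_{0}$ with $\widehat{\mathcal{X}}=\mathcal{P}_{\tau}(b)\widetilde{\mathcal{X}}+\mathcal{Y}_{0}$, obtaining $\widehat{\mathcal{X}}=\big(\mathcal{P}_{\tau}(b)\,\mathcal{P}_{\sigma}(a)\big)\mathcal{X}+\big(\mathcal{P}_{\tau}(b)\,\mathcal{X}_{0}+\mathcal{Y}_{0}\big)$. A direct index chase on the entries $p_{i\sigma(i)}=a_{i}$ shows that the product of two such generalized permutation (``monomial'') matrices is again monomial: one finds $\mathcal{P}_{\tau}(b)\,\mathcal{P}_{\sigma}(a)=\mathcal{P}_{\sigma\circ\tau}(c)$ with $c_{i}=b_{i}\,a_{\tau(i)}$. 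The only point needing attention is that the normalization survives, and it does, since $\prod_{i=1}^{n}c_{i}=\big(\prod_{i=1}^{n}b_{i}\big)\big(\prod_{i=1}^{n}a_{\tau(i)}\big)=\big(\prod_{i=1}^{n}b_{i}\big)\big(\prod_{i=1}^{n}a_{i}\big)=1$; the translation vector $\mathcal{P}_{\tau}(b)\,\mathcal{X}_{0}+\mathcal{Y}_{0}$ is simply an element of $M_{n,1}(\mathbb{R})$ and carries no constraint, so the composite lies in $\mathcal{G}$. For inverses, solving $\widetilde{\mathcal{X}}=\mathcal{P}_{\sigma}(a)\mathcal{X}+\mathcal{X}_{0}$ for $\mathcal{X}$ gives $\mathcal{X}=\mathcal{P}_{\sigma}(a)^{-1}\big(\widetilde{\mathcal{X}}-\mathcal{X}_{0}\big)$, and inverting the monomial matrix yields $\mathcal{P}_{\sigma}(a)^{-1}=\mathcal{P}_{\sigma^{-1}}(a')$ with $a'_{j}=1/a_{\sigma^{-1}(j)}$, which again satisfies $\prod_{j}a'_{j}=1/\prod_{j}a_{j}=1$. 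Thus $\mathcal{G}$ is a group; conceptually it is the semidirect product $\mathcal{H}\ltimes\mathbb{R}^{n}$, where $\mathcal{H}$ is the group of monomial matrices whose nonzero entries have product $1$, acting in the natural way on the translation part.

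Finally, to see that $\mathcal{G}$ is non-abelian it suffices to exhibit two non-commuting elements. For $n\geq 3$ the symmetric group $S_{n}$ is non-abelian, so picking $\sigma,\tau\in S_{n}$ with $\sigma\circ\tau\neq\tau\circ\sigma$ and taking the pure linear maps $\mathcal{P}_{\sigma}(1,\dots,1)$ and $\mathcal{P}_{\tau}(1,\dots,1)$ (both in $\mathcal{G}$, with $\mathcal{X}_{0}=0$), the product computation above gives $\mathcal{P}_{\tau}(1,\dots,1)\,\mathcal{P}_{\sigma}(1,\dots,1)=\mathcal{P}_{\sigma\circ\tau}(1,\dots,1)\neq\mathcal{P}_{\tau\circ\sigma}(1,\dots,1)$; alternatively, one may simply note that a nontrivial dilation and a nonzero translation never commute. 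I do not anticipate any genuine obstacle here: the whole argument is the routine verification that a family of affine maps forms a subgroup, and the only mild care required is in tracking how the pair $(\sigma,a)$ composes and in confirming that the condition $\prod_{i=1}^{n}a_{i}=1$ is preserved by products and inverses.
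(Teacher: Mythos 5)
Your proposal is correct and follows essentially the same route as the paper: both use the explicit affine description from Theorem \ref{n dimension}, establish the multiplication and inversion formulas for the monomial matrices $\mathcal{P}_{\sigma}(a_{1},\dots,a_{n})$ (your product formula $\mathcal{P}_{\tau}(b)\mathcal{P}_{\sigma}(a)=\mathcal{P}_{\sigma\circ\tau}(b_{i}a_{\tau(i)})$ agrees with the paper's after relabeling), verify closure, identity and inverses, and exhibit non-commutativity via two homogeneous transformations built from non-commuting permutations in $S_{n}$, $n\geq 3$. Your explicit check that the constraint $\prod_{i}a_{i}=1$ is preserved, and the semidirect-product remark, are minor additions, not a different argument.
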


\begin{proof}
Firstly, it is important to notice that the following matrix properties are
true:%
\begin{equation*}
\mathcal{P}_{\sigma }\left( a_{1},a_{2},...,a_{n}\right) =\left( a_{i}\delta
_{j\sigma (i)}\right) _{i,j=\overline{1,n}},
\end{equation*}%
\begin{equation*}
\mathcal{P}_{\sigma }\left( a_{1},a_{2},...,a_{n}\right) =\mathcal{P}_{\tau
}\left( b_{1},b_{2},...,b_{n}\right) \Leftrightarrow \sigma =\tau \text{ and 
}a_{i}=b_{i}\text{ }\forall \text{ }i=\overline{1,n},
\end{equation*}%
\begin{equation*}
\mathcal{P}_{\sigma }\left( a_{1},a_{2},...,a_{n}\right) \cdot \mathcal{P}%
_{\tau }\left( b_{1},b_{2},...,b_{n}\right) =\mathcal{P}_{\tau \circ \sigma
}\left( a_{1}b_{\sigma (1)},a_{2}b_{\sigma (2)},...,a_{n}b_{\sigma
(n)}\right) ,
\end{equation*}%
\begin{equation*}
\mathcal{P}_{e}\left( 1,1,...,1\right) =I_{n},\qquad \det \left[ \mathcal{P}%
_{\sigma }\left( a_{1},a_{2},...,a_{n}\right) \right] =\varepsilon (\sigma ),
\end{equation*}%
\begin{equation*}
\left[ \mathcal{P}_{\sigma }\left( a_{1},a_{2},...,a_{n}\right) \right]
^{-1}=\mathcal{P}_{\sigma ^{-1}}\left( a_{\sigma ^{-1}(1)}^{-1},a_{\sigma
^{-1}(2)}^{-1},...,a_{\sigma ^{-1}(n)}^{-1}\right) ,
\end{equation*}%
where $e$ is the identity permutation, $I_{n}$ is the identity matrix, and $%
\varepsilon (\sigma )$ is the signature of the permutation $\sigma $.

Let $\mathfrak{T}$ be the set of the local transformations of coordinates
that invariates the Berwald-Mo\'{o}r metric (\ref{n-Berwal-Moor}). Let also $%
\mathcal{S}$ and $\mathcal{T}$ be two arbitrary transformations from $%
\mathfrak{T}$. Then we have%
\begin{equation*}
\begin{array}{l}
\mathcal{S}:\mathcal{\tilde{X}}=\mathcal{P}_{\sigma }\left(
a_{1},a_{2},...,a_{n}\right) \cdot \mathcal{X}+\mathcal{X}_{0},\medskip  \\ 
\mathcal{T}:\mathcal{X}=\mathcal{P}_{\tau }\left(
b_{1},b_{2},...,b_{n}\right) \cdot \overline{\mathcal{X}}+\overline{\mathcal{%
X}}_{0}%
\end{array}%
\end{equation*}%
and%
\begin{equation*}
\mathcal{S}\circ \mathcal{T}:\widetilde{\mathcal{X}}=\mathcal{P}_{\sigma
}\left( a_{1},a_{2},...,a_{n}\right) \cdot \left[ \mathcal{P}_{\tau }\left(
b_{1},b_{2},...,b_{n}\right) \cdot \overline{\mathcal{X}}+\overline{\mathcal{%
X}}_{0}\right] +\mathcal{X}_{0}=
\end{equation*}%
\begin{equation*}
=\left[ \mathcal{P}_{\sigma }\left( a_{1},a_{2},...,a_{n}\right) \cdot 
\mathcal{P}_{\tau }\left( b_{1},b_{2},...,b_{n}\right) \right] \cdot 
\overline{\mathcal{X}}+\overline{\mathcal{X}}_{1}=
\end{equation*}%
\begin{equation*}
=\mathcal{P}_{\tau \circ \sigma }\left( a_{1}b_{\sigma (1)},a_{2}b_{\sigma
(2)},...,a_{n}b_{\sigma (n)}\right) \cdot \overline{\mathcal{X}}+\overline{%
\mathcal{X}}_{1},
\end{equation*}%
where%
\begin{equation*}
\overline{\mathcal{X}}_{1}=\mathcal{P}_{\sigma }\left(
a_{1},a_{2},...,a_{n}\right) \cdot \overline{\mathcal{X}}_{0}+\mathcal{X}%
_{0}.
\end{equation*}%
Moreover, the neutral transformation element is%
\begin{equation*}
\mathcal{E}:\mathcal{\tilde{X}}=\mathcal{P}_{e}\left( 1,1,...,1\right) \cdot 
\mathcal{X},
\end{equation*}%
and we have%
\begin{equation*}
\begin{array}{l}
\mathcal{S}^{-1}:\mathcal{X}=\left[ \mathcal{P}_{\sigma }\left(
a_{1},a_{2},...,a_{n}\right) \right] ^{-1}\cdot \left[ \widetilde{\mathcal{X}%
}-\mathcal{X}_{0}\right] =\medskip  \\ 
=\mathcal{P}_{\sigma ^{-1}}\left( a_{\sigma ^{-1}(1)}^{-1},a_{\sigma
^{-1}(2)}^{-1},...,a_{\sigma ^{-1}(n)}^{-1}\right) \cdot \widetilde{\mathcal{%
X}}+\widetilde{\mathcal{X}}_{1},%
\end{array}%
\end{equation*}%
where%
\begin{equation*}
\widetilde{\mathcal{X}}_{1}=-\mathcal{P}_{\sigma ^{-1}}\left( a_{\sigma
^{-1}(1)}^{-1},a_{\sigma ^{-1}(2)}^{-1},...,a_{\sigma ^{-1}(n)}^{-1}\right)
\cdot \mathcal{X}_{0}.
\end{equation*}

In conclusion, the set $\mathfrak{T}$ is a group with respect to the
operation of composition of functions.

Let $\sigma $ and $\tau $ be two permutations such that $\tau \circ \sigma
\neq \sigma \circ \tau $. If we take, for instance, $\mathcal{S}_{0}$ and $%
\mathcal{T}_{0}$ two transformations from $\mathfrak{T}$ having the
homogenous form%
\begin{equation*}
\mathcal{S}_{0}(\mathcal{X)}=\mathcal{P}_{\sigma }\left(
a_{1},a_{2},...,a_{n}\right) \cdot \mathcal{X},\qquad \mathcal{T}_{0}(%
\mathcal{X)}=\mathcal{P}_{\tau }\left( b_{1},b_{2},...,b_{n}\right) \cdot 
\mathcal{X},
\end{equation*}%
then we get 
\begin{equation*}
\left( \mathcal{S}_{0}\circ \mathcal{T}_{0}\right) (\mathcal{X)}=\mathcal{P}%
_{\sigma }\left( a_{1},a_{2},...,a_{n}\right) \cdot \left[ \mathcal{P}_{\tau
}\left( b_{1},b_{2},...,b_{n}\right) \cdot \mathcal{X}\right] =
\end{equation*}%
\begin{equation*}
=\left[ \mathcal{P}_{\sigma }\left( a_{1},a_{2},...,a_{n}\right) \cdot 
\mathcal{P}_{\tau }\left( b_{1},b_{2},...,b_{n}\right) \right] \cdot 
\mathcal{X=}
\end{equation*}%
\begin{equation*}
=\mathcal{P}_{\tau \circ \sigma }\left( a_{1}b_{\sigma (1)},a_{2}b_{\sigma
(2)},...,a_{n}b_{\sigma (n)}\right) \cdot \mathcal{X\neq }
\end{equation*}%
\begin{equation*}
\neq \mathcal{P}_{\sigma \circ \tau }\left( b_{1}a_{\tau (1)},b_{2}a_{\tau
(2)},...,b_{n}a_{\tau (n)}\right) \cdot \mathcal{X=}
\end{equation*}%
\begin{equation*}
=\left[ \mathcal{P}_{\tau }\left( b_{1},b_{2},...,b_{n}\right) \cdot 
\mathcal{P}_{\sigma }\left( a_{1},a_{2},...,a_{n}\right) \right] \cdot 
\mathcal{X=}\left( \mathcal{T}_{0}\circ \mathcal{S}_{0}\right) (\mathcal{X)}.
\end{equation*}%
Therefore, the group $\left( \mathfrak{T},\circ \right) $ is non-abelian.
\end{proof}

For any permutation $\sigma \in S_{n}$, let us denote by $\mathcal{W}%
_{\sigma }$ the set of all matrices of type $\mathcal{P}_{\sigma }\left(
a_{1},a_{2},...,a_{n}\right) $. We recall that such a matrix has all entries
equal to zero except the entries $a_{1\sigma (1)}=a_{1},$ $a_{2\sigma
(2)}=a_{2},$ $...,$ $a_{n\sigma (n)}=a_{n}$, verifying the equality $\Pi
_{i=1}^{n}a_{i}=1$. Let us consider the matrix%
\begin{equation*}
E_{\sigma }:=\mathcal{P}_{\sigma }\left( 1,1,...,1\right) \in \mathcal{W}%
_{\sigma }.
\end{equation*}%
In such a context, we can prove the following important algebraic result of
characterization:

\begin{proposition}
Let $\sigma \in S_{n}$ be an arbitrary permutation of the set $\{1,2,...,n\}$%
. Then, an arbitrary matrix $\mathfrak{X}\in M_{n}(\mathbb{R})$ belongs to
the set $\mathcal{W}_{\sigma ^{-1}}$ if and only if the following statements
are true:

\begin{enumerate}
\item[\emph{(1)}] $\det \left( \mathfrak{X\cdot }E_{\sigma }\right) =1;$

\item[\emph{(2)}] The vectors%
\begin{equation*}
{\mathbf{e_{1}}}=(1,0,0,...,0),\;{\mathbf{e_{2}}}=(0,1,0,...,0),\text{ }%
...,\;{\mathbf{e_{n}}}=(0,0,0,...,1)
\end{equation*}%
are eigenvectors of the matrix $\left( \mathfrak{X\cdot }E_{\sigma }\right) $%
.
\end{enumerate}
\end{proposition}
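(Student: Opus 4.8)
The plan is to reduce the whole statement to the product rule for the matrices $\mathcal{P}_{\sigma}(\cdot)$ recorded in the previous corollary, together with the elementary observation that a square matrix having all of $\mathbf{e}_{1},\dots ,\mathbf{e}_{n}$ as eigenvectors is necessarily diagonal. First I would collect the facts I need about $E_{\sigma}=\mathcal{P}_{\sigma}(1,\dots ,1)$: it is a permutation matrix, hence invertible, with $E_{\sigma}^{-1}=\mathcal{P}_{\sigma ^{-1}}(1,\dots ,1)=E_{\sigma ^{-1}}$ and $\det E_{\sigma}=\varepsilon (\sigma)=\pm 1$ (all of this is immediate from the identities $\mathcal{P}_{e}(1,\dots ,1)=I_{n}$, $[\mathcal{P}_{\sigma}(a_{i})]^{-1}=\mathcal{P}_{\sigma ^{-1}}(a_{\sigma ^{-1}(i)}^{-1})$ and $\det [\mathcal{P}_{\sigma}(a_{i})]=\varepsilon (\sigma)$ from the corollary); I would also note that $\mathcal{P}_{e}(c_{1},\dots ,c_{n})$ is the diagonal matrix $\mathrm{diag}(c_{1},\dots ,c_{n})$.

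For the ``only if'' implication I would take an arbitrary $\mathfrak{X}=\mathcal{P}_{\sigma ^{-1}}(a_{1},\dots ,a_{n})\in \mathcal{W}_{\sigma ^{-1}}$, so $\prod _{i=1}^{n}a_{i}=1$, and apply the product rule $\mathcal{P}_{\alpha}(a)\cdot \mathcal{P}_{\beta}(b)=\mathcal{P}_{\beta \circ \alpha}(a_{1}b_{\alpha (1)},\dots ,a_{n}b_{\alpha (n)})$ with $\alpha =\sigma ^{-1}$, $\beta =\sigma$ and $b=(1,\dots ,1)$. This gives $\mathfrak{X}\cdot E_{\sigma}=\mathcal{P}_{\sigma \circ \sigma ^{-1}}(a_{1},\dots ,a_{n})=\mathcal{P}_{e}(a_{1},\dots ,a_{n})=\mathrm{diag}(a_{1},\dots ,a_{n})$. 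Statement (1) then holds because $\det (\mathfrak{X}\cdot E_{\sigma})=\prod _{i=1}^{n}a_{i}=1$, and statement (2) holds because every $\mathbf{e}_{k}$ is an eigenvector of a diagonal matrix.

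For the ``if'' implication I would set $D:=\mathfrak{X}\cdot E_{\sigma}$ and use (2): $D\mathbf{e}_{k}=\lambda _{k}\mathbf{e}_{k}$ for suitable scalars $\lambda _{k}$, and since $D\mathbf{e}_{k}$ is the $k$-th column of $D$, this forces $D=\mathrm{diag}(\lambda _{1},\dots ,\lambda _{n})$; condition (1) then gives $\prod _{k=1}^{n}\lambda _{k}=\det D=1$. Since $E_{\sigma}$ is invertible I recover $\mathfrak{X}=D\cdot E_{\sigma}^{-1}=\mathrm{diag}(\lambda _{1},\dots ,\lambda _{n})\cdot E_{\sigma ^{-1}}=\mathcal{P}_{e}(\lambda _{1},\dots ,\lambda _{n})\cdot \mathcal{P}_{\sigma ^{-1}}(1,\dots ,1)$, and the product rule (now with $\alpha =e$, $\beta =\sigma ^{-1}$) yields $\mathfrak{X}=\mathcal{P}_{\sigma ^{-1}}(\lambda _{1},\dots ,\lambda _{n})$. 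As $\prod _{k=1}^{n}\lambda _{k}=1$, this matrix lies in $\mathcal{W}_{\sigma ^{-1}}$, which completes the proof.

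I do not expect a genuine obstacle here: the argument is essentially a one-line manipulation with the product formula. The only points that deserve explicit care are the bookkeeping of the composition order in that formula (checking that $\sigma \circ \sigma ^{-1}=e$ resp.\ $\sigma ^{-1}\circ e=\sigma ^{-1}$ comes out on the correct side), and stating clearly why ``each $\mathbf{e}_{k}$ is an eigenvector'' is equivalent to diagonality — an equivalence that holds whether one reads the $\mathbf{e}_{k}$ as column vectors (columns of $D$ are multiples of $\mathbf{e}_{k}$) or as row vectors (rows of $D$ are multiples of $\mathbf{e}_{k}$), so the row/column ambiguity in the statement is harmless.
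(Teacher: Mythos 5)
Your proof is correct and follows essentially the same route as the paper: both reduce the statement to the observation that conditions (1) and (2) characterize $\mathfrak{X}\cdot E_{\sigma}$ as a diagonal matrix of determinant $1$, and then solve $\mathfrak{X}=\mathcal{P}_{e}(\lambda_{1},\dots,\lambda_{n})\cdot\mathcal{P}_{\sigma^{-1}}(1,\dots,1)=\mathcal{P}_{\sigma^{-1}}(\lambda_{1},\dots,\lambda_{n})$ via the product and inverse formulas for the $\mathcal{P}_{\sigma}$ matrices. Your explicit split into the two implications and the remark on why ``all $\mathbf{e}_{k}$ are eigenvectors'' forces diagonality only make explicit what the paper leaves implicit.
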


\begin{proof}
An arbitrary matrix $A\in M_{n}(\mathbb{R})$ is a diagonal matrix of the form%
\begin{equation*}
A=\left( 
\begin{array}{ccccc}
\lambda _{1} & 0 & 0 & \cdots  & 0 \\ 
0 & \lambda _{2} & 0 & \cdots  & 0 \\ 
0 & 0 & \ddots  & \cdots  & 0 \\ 
\vdots  & \vdots  & \vdots  & \ddots  & \vdots  \\ 
0 & 0 & 0 & \cdots  & \lambda _{n}%
\end{array}%
\right) ,
\end{equation*}%
with $\lambda _{1}\lambda _{2}...\lambda _{n}=1$, if and only if $\det A=1$
and it has the vectors ${\mathbf{e_{1}}},\;{\mathbf{e_{2}}},$ $...,\;{%
\mathbf{e_{n}}}$ as eigenvectors. Using now the properties (1) and (2), we
deduce that%
\begin{equation*}
\mathfrak{X\cdot }E_{\sigma }=A\Leftrightarrow \mathfrak{X\cdot }\mathcal{P}%
_{\sigma }\left( 1,1,...,1\right) =\mathcal{P}_{e}(\lambda _{1},...,\lambda
_{n})\Leftrightarrow 
\end{equation*}%
\begin{equation*}
\Leftrightarrow \mathfrak{X}=\mathcal{P}_{e}(\lambda _{1},...,\lambda
_{n})\cdot \left[ \mathcal{P}_{\sigma }\left( 1,1,...,1\right) \right] ^{-1}=
\end{equation*}%
\begin{equation*}
=\mathcal{P}_{e}(\lambda _{1},...,\lambda _{n})\cdot \mathcal{P}_{\sigma
^{-1}}\left( 1,1,...,1\right) =\mathcal{P}_{\sigma ^{-1}}\left( \lambda
_{1},...,\lambda _{n}\right) \in \mathcal{W}_{\sigma ^{-1}}.
\end{equation*}
\end{proof}

In what follows we prove that the set of matrices $D_{n}^{1}(\mathbb{R)}:%
\mathcal{=W}_{e}$, where $e$ is the identity permutation, is a Lie group of
dimension $\left( n-1\right) $ with respect to the multiplication of
matrices.

\begin{proposition}
The set of matrices $D_{n}^{1}(\mathbb{R)}$ has a structure of commutative
Lie group with respect to the multiplication of matrices. The dimension as
manifold of the Lie group $D_{n}^{1}(\mathbb{R)}$ is $\left( n-1\right) ,$
and the corresponding Lie algebra is%
\begin{equation}
L(D_{n}^{1}(\mathbb{R)})=\{A=\text{\emph{diag}}(a_{1},...,a_{n})\text{ }|%
\text{ \emph{Trace}}(A)=0\}:=d_{n}^{1}(\mathbb{R)}.  \label{Lie algebra}
\end{equation}
\end{proposition}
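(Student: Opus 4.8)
The plan is to verify the three claims—group structure, manifold dimension, and Lie algebra—in that order, treating $D_n^1(\mathbb{R})=\mathcal{W}_e$ concretely as the set of diagonal matrices $\mathrm{diag}(a_1,\dots,a_n)$ with all $a_i\neq 0$ and $\prod_{i=1}^n a_i=1$. First I would establish the group structure: the composition formula from the previous corollary, specialized to $\sigma=\tau=e$, gives $\mathcal{P}_e(a_1,\dots,a_n)\cdot\mathcal{P}_e(b_1,\dots,b_n)=\mathcal{P}_e(a_1b_1,\dots,a_nb_n)$, which lies in $\mathcal{W}_e$ since $\prod a_ib_i=(\prod a_i)(\prod b_i)=1$; commutativity is immediate since $a_ib_i=b_ia_i$; the identity is $\mathcal{P}_e(1,\dots,1)=I_n$; and the inverse is $\mathcal{P}_e(a_1^{-1},\dots,a_n^{-1})$, again in $\mathcal{W}_e$ because $\prod a_i^{-1}=1$. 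So $(D_n^1(\mathbb{R}),\cdot)$ is a commutative group, and in fact it is visibly isomorphic to $\{(a_1,\dots,a_n)\in(\mathbb{R}^*)^n \mid \prod a_i=1\}$ under componentwise multiplication.

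Next I would address the smooth-manifold and Lie-group aspect. The natural approach is to exhibit $D_n^1(\mathbb{R})$ as a regular submanifold: consider the map $f:(\mathbb{R}^*)^n\to\mathbb{R}^*$, $f(a_1,\dots,a_n)=\prod_{i=1}^n a_i$, which is smooth with $1$ as a regular value (its differential at any point is nonzero since $\partial f/\partial a_k=\prod_{i\neq k}a_i\neq 0$), so $f^{-1}(1)$ is an embedded submanifold of $(\mathbb{R}^*)^n$ of dimension $n-1$. Identifying $D_n^1(\mathbb{R})$ with $f^{-1}(1)$ via $\mathcal{P}_e$ and checking that multiplication and inversion are smooth in these coordinates (they are, being polynomial/rational in the $a_i$ and $a_i^{-1}$) shows that $D_n^1(\mathbb{R})$ is an $(n-1)$-dimensional Lie group. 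Alternatively one can parametrize directly by $(a_1,\dots,a_{n-1})\in(\mathbb{R}^*)^{n-1}$ with $a_n=(a_1\cdots a_{n-1})^{-1}$, giving an explicit global chart, which perhaps makes the dimension count most transparent.

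Finally, for the Lie algebra I would compute the tangent space at the identity $I_n=\mathcal{P}_e(1,\dots,1)$. A smooth curve in $D_n^1(\mathbb{R})$ through $I_n$ has the form $t\mapsto\mathrm{diag}(a_1(t),\dots,a_n(t))$ with $a_i(0)=1$ and $\prod_i a_i(t)=1$; differentiating the constraint at $t=0$ via the product rule gives $\sum_{i=1}^n a_i'(0)=0$, so the velocity vector $A=\mathrm{diag}(a_1'(0),\dots,a_n'(0))$ satisfies $\mathrm{Trace}(A)=0$; conversely any traceless diagonal matrix $A$ is realized by the curve $t\mapsto\exp(tA)=\mathrm{diag}(e^{ta_1},\dots,e^{ta_n})$, whose product of diagonal entries is $e^{t\,\mathrm{Trace}(A)}=1$. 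Hence $L(D_n^1(\mathbb{R}))=\{A=\mathrm{diag}(a_1,\dots,a_n)\mid\mathrm{Trace}(A)=0\}=d_n^1(\mathbb{R})$ as a vector space, and the bracket is trivial (matching commutativity of the group, since all these diagonal matrices commute). I do not expect a genuine obstacle here—the statement is essentially a direct computation once $D_n^1(\mathbb{R})$ is recognized as (isomorphic to) the $(n-1)$-torus-like multiplicative group $\mathrm{SL}$-type torus $\{\prod a_i=1\}$; the only point requiring a little care is justifying the regular-value / submanifold argument rather than just asserting the dimension, and making sure the Lie group axioms (smoothness of multiplication and inversion) are explicitly checked in the chosen parametrization.
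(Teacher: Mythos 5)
Your proposal is correct, and its overall structure matches the paper's: identify $D_n^1(\mathbb{R})$ as the commutative group of determinant-one diagonal matrices, give it an $(n-1)$-dimensional smooth structure, and compute the Lie algebra by differentiating the constraint $\prod_i a_i(t)=1$ along curves through $I_n$. The differences are minor but worth noting. For the manifold structure, you lead with the regular-value theorem applied to $f(a_1,\dots,a_n)=\prod_i a_i$ on $(\mathbb{R}^*)^n$, whereas the paper works with the explicit global parametrization $A=\mathrm{diag}\bigl(a_1,\dots,a_{n-1},(a_1\cdots a_{n-1})^{-1}\bigr)$ and the chart $\phi_U(A)=(a_1,\dots,a_{n-1})$, then verifies smoothness of the single map $\mu(A,B)=A^{-1}B$ in these coordinates; you mention this parametrization only as an alternative. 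Both routes are valid; the paper's is more elementary (no submanifold machinery), yours is more standard and generalizes more readily. For the Lie algebra, your argument is in fact more complete than the paper's: the paper only differentiates $\det A(t)=1$ to show every tangent vector at $I_n$ is traceless and then asserts equality with $d_n^1(\mathbb{R})$, while you also supply the converse inclusion via the curve $t\mapsto\exp(tA)=\mathrm{diag}(e^{ta_1},\dots,e^{ta_n})$, which realizes any traceless diagonal $A$ (alternatively one could invoke the dimension count $\dim=n-1$), and you note the vanishing bracket, which the paper defers to the subsequent remark. No gaps in your argument.
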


\begin{proof}
It is obvious that%
\begin{equation*}
D_{n}^{1}(\mathbb{R)}=\mathcal{W}_{e}=\left\{ \mathcal{P}_{e}\left(
a_{1},a_{2},...,a_{n}\right) \text{ }|\text{ }a_{i}\in \mathbb{R}\right\} =
\end{equation*}%
\begin{equation*}
=\left\{ A=\text{diag}(a_{1},...,a_{n})\text{ }|\text{ }\det A=a_{1}\cdot
...\cdot a_{n}=1\right\}
\end{equation*}%
is a commutative subgroup of the special linear group $SL_{n}(\mathbb{R})$.
It is also easy to see that we have 
\begin{equation*}
D_{n}^{1}(\mathbb{R)}=\left\{ \left. A=\text{diag}\left( a_{1},...,a_{n-1},%
\dfrac{1}{a_{1}\cdot ...\cdot a_{n-1}}\right) \text{ }\right\vert \text{ }%
a_{1},\text{ }...,\text{ }a_{n-1}\in \mathbb{R}\backslash \{0\}\right\} .
\end{equation*}

Let $\phi _{U}:D_{n}^{1}(\mathbb{R)}\rightarrow U\subset M:=\mathbb{R}%
^{n-1}\backslash \left\{ \left( a_{1},...,a_{n-1}\right) \text{ }|\text{ }%
a_{1}\cdot ...\cdot a_{n-1}=0\right\} $ be the bijection defined by $\phi
_{U}\left( A\right) =\left( a_{1},...,a_{n-1}\right) $, where $U$ is an
arbitrary local chart on $M$. It folows that we can endow $D_{n}^{1}(\mathbb{%
R)}$ with a differentiable structure of dimension $\left( n-1\right) $ such
that all maps $\phi _{U}$ to become diffeomorfisms. Therefore, the mapping $%
\mu :D_{n}^{1}(\mathbb{R)}\times D_{n}^{1}(\mathbb{R)}\rightarrow D_{n}^{1}(%
\mathbb{R)}$, defined by%
\begin{equation*}
\mu (A,B)=A^{-1}\cdot B,
\end{equation*}%
can be locally rewritten as%
\begin{equation*}
\widetilde{\mu }:\mathbb{R}^{2n-2}\backslash \left\{ \left(
a_{1},...,a_{n-1},b_{1},...,b_{n-1}\right) \text{ }|\text{ }a_{1}\cdot
...\cdot a_{n-1}\cdot b_{1}\cdot ...\cdot b_{n-1}=0\right\} \rightarrow 
\mathbb{R}^{n-1},
\end{equation*}%
where%
\begin{equation*}
\widetilde{\mu }\left( a_{1},...,a_{n-1},b_{1},...,b_{n-1}\right) =\left( 
\frac{b_{1}}{a_{1}},\frac{b_{2}}{a_{2}},...,\frac{b_{n-1}}{a_{n-1}}\right) .
\end{equation*}%
It is obvious that $\widetilde{\mu }$ is a smooth map on the open domain%
\begin{equation*}
D=\mathbb{R}^{2n-2}\backslash \left\{ \left(
a_{1},...,a_{n-1},b_{1},...,b_{n-1}\right) \text{ }|\text{ }a_{1}\cdot
...\cdot a_{n-1}\cdot b_{1}\cdot ...\cdot b_{n-1}=0\right\} .
\end{equation*}

In conclusion, $D_{n}^{1}(\mathbb{R)}$ is a commutative Lie group of
dimension $\left( n-1\right) $.

In order to compute the Lie algebra $L(D_{n}^{1}(\mathbb{R)})=d_{n}^{1}(%
\mathbb{R)}$, let us consider an arbitrary curve%
\begin{equation*}
\alpha :(-\epsilon ,\epsilon )\rightarrow D_{n}^{1}(\mathbb{R)},\qquad
\alpha (t)=A(t)=\text{diag}(a_{1}(t),...,a_{n}(t))\in D_{n}^{1}(\mathbb{R)},
\end{equation*}%
where $\alpha (0)=A(0)=I_{n}$. It follows that we have%
\begin{equation*}
\dot{\alpha}(0)=\left. \frac{dA}{dt}\right\vert _{t=0}=\text{diag}(\dot{a}%
_{1}(0),...,\dot{a}_{n}(0)).
\end{equation*}%
Because we have $\det A(t)=1$, we deduce that%
\begin{equation*}
0=\left. \frac{d\left[ \det A\right] }{dt}\right\vert _{t=0}=\det \left[ 
\text{diag}(\dot{a}_{1}(0),1,...,1)\right] +
\end{equation*}%
\begin{equation*}
+\det \left[ \text{diag}(1,\dot{a}_{2}(0),1,...,1)\right] +\text{ }...\text{ 
}+\det \left[ \text{diag}(1,1,...,1,\dot{a}_{n}(0))\right] =
\end{equation*}%
\begin{equation*}
=\dot{a}_{1}(0)+\dot{a}_{1}(0)+...+\dot{a}_{n}(0).
\end{equation*}%
This means that the Lie algebra of $D_{n}^{1}(\mathbb{R)}$ is given by (\ref%
{Lie algebra}).
\end{proof}

\begin{remark}
A basis of the Lie algebra $d_{n}^{1}(\mathbb{R)}$ is given by $\left(
E_{i}\right) _{i=\overline{1,n-1}},$ where 
\begin{equation*}
E_{i}=\left( 
\begin{array}{ccccccccc}
0 & 0 & \cdots  & \cdots  & \cdots  & \cdots  & \cdots  & 0 & 0 \\ 
0 & 0 & \cdots  & \cdots  & \cdots  & \cdots  & \cdots  & 0 & 0 \\ 
\vdots  & \vdots  & \ddots  & \ddots  & \ddots  & \ddots  & \ddots  & \vdots 
& \vdots  \\ 
0 & 0 & \ddots  & 0 & \ddots  & \ddots  & \ddots  & 0 & 0 \\ 
0 & 0 & \ddots  & \ddots  & 1 & \ddots  & \ddots  & 0 & 0 \\ 
0 & 0 & \ddots  & \ddots  & \ddots  & 0 & \ddots  & 0 & 0 \\ 
\vdots  & \vdots  & \ddots  & \ddots  & \ddots  & \ddots  & \ddots  & \vdots 
& \vdots  \\ 
0 & 0 & \cdots  & \cdots  & \cdots  & \cdots  & \cdots  & 0 & 0 \\ 
0 & 0 & \cdots  & \cdots  & \cdots  & \cdots  & \cdots  & 0 & -1%
\end{array}%
\right) .
\end{equation*}%
Notice that in the matrix $E_{i}$ the number $1$ appears on the position $%
(i,i)$. In other words, we can briefly write%
\begin{equation*}
E_{i}=\left( \delta _{ri}\delta _{si}-\delta _{rn}\delta _{sn}\right) _{r,s=%
\overline{1,n}},\quad \forall \text{ }i=\overline{1,n-1}.
\end{equation*}%
Consequently, it is easy to see now that \textbf{all structure constants of
the Lie algebra }$d_{n}^{1}(R)$\textbf{\ are equal to zero}.
\end{remark}

\noindent\textbf{Acknowledgements.} The authors of this paper thank
Professor M. P\u{a}un for the useful geometrical discussions on this
research topic.

\end{document}